\documentclass[11pt]{amsart}
\usepackage{amsmath}
\usepackage{amssymb}
\usepackage{amsthm}
\usepackage{amscd}
\usepackage{xypic}
\usepackage{delarray}

\def\G{{\mathbb G}}

\def\P{{\mathbb P}}

\setcounter{MaxMatrixCols}{20}
\newtheorem{theorem}{Theorem}[section]
\newtheorem{lemma}[theorem]{Lemma}

\newtheorem{corollary}[theorem]{Corollary}

\theoremstyle{definition}

\newtheorem{remark}[theorem]{Remark}

\newtheorem{convention and reminder}[theorem]{Convention and Reminder}
\newtheorem{convention and remark}[theorem]{Convention and Remark}
\newtheorem{definition and remark}[theorem]{Definition and Remark}

\newtheorem{reminders and definition}[theorem]{Reminders and Definition}

\newtheorem{notation and remarks}[theorem]{Notation and Remarks}
\newtheorem{notation and remark}[theorem]{Notation and Remark}

\newcommand\Ker{\operatorname{\Ker}}

%%Preambles added by K. Han%%

\def\QR{\mathsf{QR}}

%%%%%%%%%%%%%%%%%

\title[On the rank of quadratic equations for curves of high degree]{On the rank of quadratic equations for curves of high degree}

\author[E. Park]{Euisung Park}
\address{Euisung Park, Department of Mathematics, Korea University, Seoul 136-701, Republic of Korea}
\email{euisungpark@korea.ac.kr}

\begin{document}

%\date{\today}
\keywords{projective curve, homogeneous ideal, property $\QR(3)$}
\subjclass[2010]{14H99, 14C20, 14N05}

\begin{abstract}
Let $\mathcal{C} \subset \P^r$ be a linearly normal curve of
arithmetic genus $g$ and degree $d$. In \cite{SD}, B. Saint-Donat proved that the homogeneous ideal
$I(\mathcal{C})$ of $\mathcal{C}$ is generated by quadratic equations of
rank at most $4$ whenever $d \geq 2g+2$. Also, in \cite{EKS} Eisenbud, Koh and Stillman
proved that $I(\mathcal{C})$ admits a determinantal presentation if $d \geq 4g+2$. In this paper, we will show that $I(\mathcal{C})$ can be generated by quadratic equations of rank $3$ if either
$g=0,1$ and $d \geq 2g+2$ or else $g \geq 2$ and $d \geq 4g+4$.
\end{abstract}

\maketitle \setcounter{page}{1}
%%%%%%%%%%%%%%%%%%%%%%%%%%%%%%%%%%%%%%%%%%%%%%%%%%%%%%%%%%%%%%%%%%%%%%%%%%%

\section{Introduction}

\noindent Throughout this paper, we work over an algebraically
closed field $\Bbbk$ of arbitrary characteristic. We denote by
$\mathbb P^r$ the projective $r$-space over $\Bbbk$.

Let $\mathcal{C}$ be a projective integral curve of arithmetic genus
$g$ and $\mathcal{L}$ a base point free line bundle on
$\mathcal{C}$ of degree $d$, defining a map
\begin{equation*}
\phi_{|\mathcal{L}|} : \mathcal{C} \rightarrow \P^r
\end{equation*}
where $r = h^0 (\mathcal{C},\mathcal{L})-1$. We denote by
$I(\mathcal{C})$ the homogeneous ideal of $\phi_{|\mathcal{L}|} (
\mathcal{C})$ in $\P^r$. The generating structure of $I(\mathcal{C})$ is relatively well understood for the cases where either
\begin{enumerate}
\item[$(i)$] $\mathcal{C}$ is smooth and $\mathcal{L}=\omega_{\mathcal{C}}$ is the canonical
line bundle
\end{enumerate}
or else
\begin{enumerate}
\item[$(ii)$] $d \geq 2g+1$.
\end{enumerate}
In case $(i)$, Max Noether-Enriques-Petri Theorem in \cite{SD2}
says that if $\mathcal{C}$ is not hyperelliptic, then the above map
is a projectively normal embedding, and $I(\mathcal{C})$ is
generated by quadrics except $\mathcal{C}$ is a plane quintic
($g=6$) or trigonal (cf. \cite{En, Pe}). In \cite{Gre}, M. Green
reproved the classical Torelli's Theorem by showing that
$I(\mathcal{C})$ is generated by quadrics of rank $3$ and $4$. See
also \cite{ArH}. In case $(ii)$, the above map is always a
projectively normal embedding (cf. \cite{C, M, F}). In \cite{SD}, B.
Saint-Donat proved that if $d \geq 2g+2$ then $I(\mathcal{C})$ is
generated by quadratic equations of rank at most $4$. Also Eisenbud,
Koh and Stillman in \cite{EKS} proved that if $d \geq 4g+2$ then
$(\mathcal{C},\mathcal{L})$ is \textit{determinantally presented} in
the sense that $I(\mathcal{C})$ is generated by $2$-minors of a
$1$-generic matrix of linear forms on $\P^r$. Concerned with these works, we will say that $(\mathcal{C},\mathcal{L})$ \textit{satisfies property $\QR (k)$} for a positive integer $k$ if $\mathcal{L}$ is very ample and $I(\mathcal{C})$ can be generated by quadrics of rank at most $k$. For example,
$(\mathcal{C},\mathcal{L})$ satisfies property $\QR (4)$ if either
$d \geq 2g+2$ (cf. \cite{SD}) or else $\mathcal{L}=K_{\mathcal{C}}$
and $I(\mathcal{C})$ is generated by quadrics (cf. \cite{Gre}).

In this paper, we obtain the following new result about
the quadratic generators of the homogeneous ideal $I(\mathcal{C})$.

\begin{theorem}\label{thm:main 1}
Let $\mathcal{C}$ be a projective integral curve of arithmetic genus
$g$ and $\mathcal{L}$ a line bundle on $\mathcal{C}$ of degree $d \geq 2g+2$.
Then $(\mathcal{C},\mathcal{L})$ satisfies property $\QR (3)$ whenever either
\begin{enumerate}
\item[$(i)$] ${\rm char}~\Bbbk \neq 2$, $g = 0$ and $d \geq 2$, or else
\item[$(ii)$] ${\rm char}~\Bbbk \neq 2$, $g = 1$ and $d \geq 4$, or else
\item[$(iii)$] ${\rm char}~\Bbbk \neq 2,3$, $g \geq 2$ and $d \geq 4g+4$, or else
\item[$(iv)$] ${\rm char}~\Bbbk = 0$, $g \geq 7$, $\mathcal{C}$
is smooth and general in the moduli $\mathcal{M}_g$ and $d \geq 4g+2$.
\end{enumerate}
\end{theorem}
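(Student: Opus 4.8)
The plan is to reduce property $\QR(3)$ to a concrete spanning statement and then to produce enough rank-$3$ quadrics by a polarization argument. Since $d \geq 2g+2$, the embedding $\phi_{|\mathcal{L}|}$ is projectively normal and $I(\mathcal{C})$ is generated by quadrics (Saint-Donat), so $I(\mathcal{C})_2 = \ker\big(\operatorname{Sym}^2 H^0(\mathcal{C},\mathcal{L}) \to H^0(\mathcal{C},\mathcal{L}^{2})\big)$ and it suffices to prove that the rank-$\leq 3$ quadrics lying in $I(\mathcal{C})_2$ span this vector space. The first observation is that a rank-$\leq 3$ quadric in $I(\mathcal{C})_2$ is exactly a \emph{square relation} $xy - z^2$ determined by three sections $x,y,z \in H^0(\mathcal{C},\mathcal{L})$ with $xy = z^2$ in $H^0(\mathcal{C},\mathcal{L}^2)$; equivalently, by a triple of effective divisors $D_x, D_y, D_z \in |\mathcal{L}|$ with $D_x + D_y = 2D_z$.

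The cleanest source of such relations is a square root $\mathcal{L} \cong \mathcal{A}^{\otimes 2}$ with $\mathcal{A}$ normally generated: for $u,v \in H^0(\mathcal{C},\mathcal{A})$ the sections $u^2, v^2, uv \in H^0(\mathcal{C},\mathcal{L})$ satisfy $(u^2)(v^2) = (uv)^2$, giving the rank-$3$ quadric $Q_{u,v} = u^2 v^2 - (uv)^2 \in I(\mathcal{C})_2$. The point is that these span: in characteristic $\neq 2$ the polarization identity $\tfrac12\big(Q_{u+w,v} - Q_{u,v} - Q_{w,v}\big) = uw\,v^2 - (uv)(wv)$ shows the span already contains the first polarizations, and iterating the polarization in the second slot yields all $2\times 2$ minors of the symmetric multiplication matrix $(a_i a_j)$ attached to a basis $\{a_i\}$ of $H^0(\mathcal{C},\mathcal{A})$. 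Because $\mathcal{A}$ is normally generated these minors generate the ideal, so $\QR(3)$ holds. This is the mechanism behind the characteristic $\neq 2$ hypothesis throughout.

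For $g=0$ and $g=1$ the divisor description makes the construction available for every admissible $d$, not only when $\mathcal{L}$ is a square: given $D_z \in |\mathcal{L}|$ one splits $2D_z = D_x + D_y$ by moving a subdivisor along a rational function whose zero and polar parts are bounded by $D_z$, which is possible once $\mathcal{L}$ is sufficiently positive ($d \geq 2$ for $g=0$, $d\geq 4$ for $g=1$), and the same polarization then yields spanning. For $g \geq 2$ and $d \geq 4g+4$ I would factor $\mathcal{L} \cong \mathcal{A}\otimes\mathcal{B}$ with $\deg\mathcal{A},\deg\mathcal{B}\geq 2g+2$ so that both factors are normally generated, and reduce the spanning of $I(\mathcal{C})_2$ to a symmetric bilinear spanning problem in $H^0(\mathcal{C},\mathcal{A})\otimes H^0(\mathcal{C},\mathcal{B})$; pushing the polarization one order further to absorb the asymmetry between the two factors introduces the coefficient $3$, which is the source of the characteristic $\neq 3$ hypothesis in case $(iii)$.

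Finally, in case $(iv)$ the bound $d \geq 4g+2$ is exactly the Eisenbud--Koh--Stillman threshold, so $(\mathcal{C},\mathcal{L})$ is determinantally presented and $I(\mathcal{C})$ is generated by the $2\times 2$ minors $X_1X_2 - X_3X_4$ of a $1$-generic matrix of linear forms; these are rank-$\leq 4$, and I would use the genericity of $\mathcal{C}$ in $\mathcal{M}_g$ to arrange the coincidences needed to express them through square relations and the polarization span, thereby upgrading rank $4$ to rank $3$. The main obstacle is the spanning step itself: verifying that the explicitly constructed rank-$3$ square relations exhaust all of $I(\mathcal{C})_2$ --- in particular in the non-square range, where the clean family $u^2v^2-(uv)^2$ is unavailable --- and tracking the small denominators in the iterated polarizations that force $\operatorname{char}\Bbbk \neq 2,3$. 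Everything else (projective normality, generation by quadrics, the determinantal presentation) is supplied by the cited results.
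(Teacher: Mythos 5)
Your proposal contains a genuine gap at its central step, and the justification you give for that step is incorrect. When $\mathcal{L}=\mathcal{A}^{2}$ with $\mathcal{A}$ normally generated, your polarization identities do show (in characteristic $\neq 2$) that the span of the rank-$3$ quadrics $u^2v^2-(uv)^2$ contains all $2\times 2$ minors of the multiplication matrix $(a_ia_j)$. But those minors generate the ideal of the Veronese image $v_2\bigl(\P H^0(\mathcal{A})\bigr)$, \emph{not} the ideal of $\mathcal{C}$: the space $I(\mathcal{C})_2=\ker\bigl(\operatorname{Sym}^2H^0(\mathcal{L})\to H^0(\mathcal{L}^2)\bigr)$ strictly contains the span of all the minors whenever $\mathcal{C}\neq \P H^0(\mathcal{A})$, the discrepancy being accounted for by quadrics whose restriction to the Veronese corresponds to degree-$4$ equations of $\mathcal{C}\subset\P H^0(\mathcal{A})$. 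Concretely, for an elliptic curve with $\deg\mathcal{A}=3$, $\mathcal{L}=\mathcal{A}^2$ embeds $\mathcal{C}$ in $\P^5$ with $\dim_{\Bbbk}I(\mathcal{C})_2=21-12=9$, while the span of \emph{all} $2\times2$ minors of the symmetric $3\times3$ matrix has dimension $21-15=6$; the three missing quadrics come from the plane cubic equation of $\mathcal{C}$ and are not reached by your construction. Closing exactly this gap (and this is where ${\rm char}\,\Bbbk\neq 3$ really enters) is the content of \cite[Corollary 5.2]{HLMP}, which the paper cites as a black box rather than reproving; your sketch does not recover it.

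The second gap is structural: you have no valid mechanism for odd $d$, where $\mathcal{L}$ is not a square. Your fallback factorization $\mathcal{L}=\mathcal{A}\otimes\mathcal{B}$ with $\mathcal{A}\not\cong\mathcal{B}$ only produces minors $(u_1v_1)(u_2v_2)-(u_1v_2)(u_2v_1)$ of rank $4$, and a rank-$3$ relation $xy=z^2$ requires a genuine square root of $D_x+D_y$, which an asymmetric factorization does not supply; ``pushing the polarization one order further'' does not create one. The paper's solution is precisely its main structural result (Theorem \ref{thm:curve structure of rank}): if every line bundle of a single degree $\tau\geq 2g+2$ satisfies $\QR(3)$, then every line bundle of every degree $\geq\tau$ does --- proved by the cone/inner-projection dimension count of Lemma \ref{lem:dimension counting}, with no characteristic or parity hypotheses. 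Combined with the $2$-divisibility of $\operatorname{Pic}^0(\mathcal{C})$ (so that \emph{every} line bundle of degree $4g+4$, resp.\ $4g+2$, is $\mathcal{M}^2$ with $\deg\mathcal{M}=2g+2$, resp.\ $2g+1$), this reduces all cases to the square situation handled by \cite{HLMP}. Your proposal misses this reduction entirely, which is also why your treatments of the remaining cases stay vague: for $(iv)$, ``use genericity to arrange the coincidences'' is not an argument --- the paper needs the Green--Lazarsfeld criterion together with Kempf's theorem on torsion divisors, applied to $\mathcal{M}$ and a $2$-torsion twist $\mathcal{M}\otimes\mathcal{E}$, to guarantee a quadratically generated factor before invoking \cite{HLMP}; and for $(i)$--$(ii)$ your divisor-splitting claim is unsubstantiated, whereas the paper only needs the explicit base cases (the conic, and degree-$4$ elliptic curves as pencils of quadrics meeting the rank-$3$ quartic hypersurface, plus the classification of \cite{LPS} in the singular case) and then propagates by Theorem \ref{thm:curve structure of rank}.
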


For the proofs of these statements, see Corollary \ref{cor:elliptic nor curve}, Corollary \ref{cor:rat nor curve}, Theorem \ref{thm:main 2} and Theorem \ref{thm:main 3}.

Roughly speaking, this result says that all sufficiently high degree
embedding of projective integral curves share the property $\QR
(3)$. Also note that $I(\mathcal{C})$ contains no quadrics of rank $<3$ since $\phi_{|\mathcal{L}|} (
\mathcal{C})$ is nondegenerate in $\P^r$.

Theorem \ref{thm:main 1} is a consequence of the following interesting result on the property $\QR
(3)$ of projective integral curves.

\begin{theorem}\label{thm:curve structure of rank}
Let $\mathcal{C}$ be a projective integral curve of arithmetic genus
$g$ over an algebraically closed field $\Bbbk$ with ${\rm
char}~\Bbbk \neq 2$. Suppose that there exists an integer $\tau \geq
2g+2$ satisfying the condition that
\begin{enumerate}
\item[$(\dagger)$] every line bundle $\mathcal{L}$ on $\mathcal{C}$ of degree $\tau$ satisfies property $\QR(3)$.
\end{enumerate}
Then every line bundle on $\mathcal{C}$ of degree $\geq \tau$
satisfies property $\QR(3)$.
\end{theorem}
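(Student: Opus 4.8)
The plan is to argue by induction on the degree $d:=\deg\mathcal{L}\geq\tau$. The base case $d=\tau$ is exactly hypothesis $(\dagger)$, so I assume $d>\tau$ together with the inductive hypothesis that \emph{every} line bundle on $\mathcal{C}$ of degree $d-1$ satisfies $\QR(3)$. Given $\mathcal{L}$ of degree $d$, fix a smooth point $p\in\mathcal{C}$ and set $\mathcal{M}:=\mathcal{L}(-p)$, a line bundle of degree $d-1\geq\tau\geq 2g+2$; by the inductive hypothesis $(\mathcal{C},\mathcal{M})$ satisfies $\QR(3)$. Since $d,d-1\geq 2g+1$, both $\mathcal{L}$ and $\mathcal{M}$ are very ample and define projectively normal embeddings, and by Saint-Donat's theorem quoted in the introduction both $I(\mathcal{C})$ and the ideal of $\phi_{|\mathcal{M}|}(\mathcal{C})$ are generated by quadrics. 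Hence it suffices to prove the degree-two statement that $I(\mathcal{C})_2$ is spanned, as a $\Bbbk$-vector space, by quadrics of rank $\leq 3$; replacing the quadric generators by such a spanning set then yields $\QR(3)$ for $(\mathcal{C},\mathcal{L})$.

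The geometric device is inner projection. Write $\mathcal{C}\subset\P^r$ for $\phi_{|\mathcal{L}|}(\mathcal{C})$, put $P:=\phi_{|\mathcal{L}|}(p)$, and choose coordinates $x_0,\dots,x_r$ with $P=[1:0:\cdots:0]$, so that $x_1,\dots,x_r$ form a basis of $H^0(\mathcal{C},\mathcal{L}(-p))$ and $x_0$ corresponds to a section nonvanishing at $p$. The projection $\pi_P$ restricts on $\mathcal{C}$ to $\phi_{|\mathcal{M}|}$, with image $\mathcal{C}':=\phi_{|\mathcal{M}|}(\mathcal{C})\subset\P^{r-1}$, a degree $(d-1)$ projectively normal curve whose ideal $I(\mathcal{C}')$ is, by induction, generated by rank $\leq 3$ quadrics (and these stay of rank $\leq 3$ in $\Bbbk[x_0,\dots,x_r]$, since adjoining a variable preserves the rank of a quadratic form). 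As $P\in\mathcal{C}$, no quadric of $I(\mathcal{C})_2$ contains $x_0^2$, so every such quadric has the form $Q=x_0L+Q'$ with $L\in\langle x_1,\dots,x_r\rangle$ and $Q'\in\Bbbk[x_1,\dots,x_r]_2$. I would then record the exact sequence $0\to I(\mathcal{C}')_2\to I(\mathcal{C})_2\xrightarrow{\,Q\mapsto L\,}\langle x_1,\dots,x_r\rangle$, whose kernel is $I(\mathcal{C}')_2=I(\mathcal{C})_2\cap\Bbbk[x_1,\dots,x_r]$. A Riemann--Roch computation using projective normality gives $\dim I(\mathcal{C})_2-\dim I(\mathcal{C}')_2=d-g-1=h^0(\mathcal{C},\mathcal{L}(-2p))$, and the image of $Q\mapsto L$ is precisely $W:=H^0(\mathcal{C},\mathcal{L}(-2p))\subset H^0(\mathcal{C},\mathcal{L}(-p))$, the sections vanishing to order $\geq 2$ at $p$. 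Thus $I(\mathcal{C})_2=I(\mathcal{C}')_2\oplus V$ with $V\xrightarrow{\ \sim\ }W$.

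Since $I(\mathcal{C}')_2$ is already spanned by rank $\leq 3$ quadrics, everything reduces to producing rank $\leq 3$ quadrics in $I(\mathcal{C})_2$ whose images $L$ span $W$: any such family, adjoined to the rank $\leq 3$ generators of $I(\mathcal{C}')_2$, spans $I(\mathcal{C})_2$. Here I use that, in characteristic $\neq 2$, a quadric of rank $\leq 3$ vanishing on $\mathcal{C}$ is exactly one of the form $Q=uv-w^2$ with $u,v,w\in H^0(\mathcal{C},\mathcal{L})$ satisfying $uv=w^2$ in $H^0(\mathcal{C},\mathcal{L}^2)$, i.e.\ $\operatorname{div}(u)+\operatorname{div}(v)=2\operatorname{div}(w)$. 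A direct computation, using $u(p)v(p)=w(p)^2$, identifies its image as $L=u(p)\,v+v(p)\,u-2w(p)\,w\in W$. Choosing $w$ to vanish at $p$ and $u$ not to vanish at $p$ forces $\operatorname{ord}_p(v)=2\operatorname{ord}_p(w)\geq 2$, hence $v\in W$, and yields $L=u(p)\,v$, proportional to $v$. So the task becomes: realize enough targets $v\in W$ as $v=w^2/u$ with $u\in H^0(\mathcal{L})$, $u(p)\neq 0$, $w\in H^0(\mathcal{L}(-p))$, and $\operatorname{div}(u)\leq 2\operatorname{div}(w)$, and check that these $v$ span $W$.

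I expect this last step to be the main obstacle. The constraint $uv=w^2$ is rigid: it demands that a prescribed effective divisor be \emph{twice} an effective divisor linearly equivalent to $\mathcal{L}$, and in positive genus this runs into a genuine $2$-torsion obstruction in $\operatorname{Pic}(\mathcal{C})$ that is absent when $g=0$. (For $g=0$ one checks the spanning directly: choosing a single $w$ with $\operatorname{div}(w)=p+E$, $E$ reduced and general, the sections $w^2/u$ already exhaust $W$ because over an algebraically closed field every form of the relevant degree splits completely.) The strategy to overcome the obstruction in general is to exploit the freedom available when $d$ is large relative to $g$: one chooses the divisor $2\operatorname{div}(w)$ adapted to each target $v$, uses algebraic closedness of $\Bbbk$ to split it, and uses base-point-freeness together with a moving argument to keep $\operatorname{div}(u)$ and $\operatorname{div}(w)$ in general position, away from $p$ and clear of the $2$-torsion locus. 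The hypothesis $\operatorname{char}\Bbbk\neq 2$ enters both in the normal form $uv-w^2$ for rank $\leq 3$ quadrics and in the factor $2$ governing these perfect-square manipulations.
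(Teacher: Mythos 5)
Your reduction is carried out correctly as far as it goes, and its first half genuinely overlaps with the paper: the paper also inducts on the degree (it proves the implication from degree $\tau$ to degree $\tau+1$), and it also exploits the fact that the quadrics through the cone over $\mathcal{C}'=\phi_{|\mathcal{L}(-p)|}(\mathcal{C})$ are spanned by rank $3$ ones, by applying the inductive hypothesis to the lower-degree bundle. Your exact sequence, the count $\dim I(\mathcal{C})_2-\dim I(\mathcal{C}')_2=d-g-1$, and the normal form $uv-w^2$ with $\operatorname{div}(u)+\operatorname{div}(v)=2\operatorname{div}(w)$ are all fine. The problem is the step you yourself flag as ``the main obstacle'': producing rank $\leq 3$ quadrics whose images span $W=H^0(\mathcal{C},\mathcal{L}(-2p))$. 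This is not a technical loose end that a ``moving argument'' can close; the obstruction you mention is fatal to the route you propose. Indeed, if $v\in W$ is general then $\operatorname{div}(v)=2p+E$ with $E$ reduced and avoiding $p$, and the relation $\operatorname{div}(u)+\operatorname{div}(v)=2\operatorname{div}(w)$ with $u(p)\neq 0$ forces $\operatorname{div}(u)=E+2q$ for a single point $q\neq p$ with $2q\sim 2p$, i.e.\ $\mathcal{O}_\mathcal{C}(q-p)$ a nontrivial $2$-torsion class. For $g\geq 2$ and a general curve, such a point $q$ exists for at most finitely many choices of $p$ (the Abel--Jacobi image meets only finitely many translates by $2$-torsion), so for most $p$ \emph{no} general $v\in W$ is the image of any rank $\leq 3$ quadric. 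The realizable targets form a proper, non-linear constructible subset of $W$, and whether their span fills the $(d-g-1)$-dimensional space $W$ is precisely the kind of hard spanning problem that genericity of auxiliary divisors cannot resolve, because the linear-equivalence constraint never goes away.

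The paper's proof sidesteps this entirely: it never constructs a single rank $3$ quadric by hand. Instead of one projection point it takes three, $p_1,p_2,p_3$, and the three cones $S_i=\mbox{Join}(p_i,\mathcal{C}_0)$, each of whose spaces of quadrics is spanned by rank $3$ quadrics by the hypothesis applied to the three \emph{different} bundles $\mathcal{L}(-p_i)$ of degree $\tau$. A Bezout-type argument on $T=\mbox{Join}(\mathbb{L},\mathcal{C}_0)$, where $\mathbb{L}$ is the line through $p_1,p_2$, gives $I(S_1)_2\cap I(S_2)_2=I(T)_2$, and a dimension count using projective normality then shows $I(S_1)_2+I(S_2)_2$ has codimension exactly $1$ in $I(\mathcal{C}_0)_2$; finally, a short geometric argument ($\mathcal{C}_0$ is cut out by quadrics, hence has no trisecant line, and the irreducible surface $S_3$ cannot contain the plane spanned by $p_1,p_2,p_3$) shows $I(S_3)_2$ is not contained in that span, so the three cones together span all of $I(\mathcal{C}_0)_2$. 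In your language: one cone leaves a gap of dimension $d-g-1$, a second cone shrinks the gap to dimension $1$, and a third closes it. Replacing your single projection and the explicit $uv-w^2$ construction by this three-cone dimension count is the missing idea.
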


For example, it is obviously true that $(\P^1 , \mathcal{O}_{\P^1} (2))$ satisfies property $\QR
(3)$. Then we can conclude by Theorem \ref{thm:curve structure of rank} that $(\P^1 , \mathcal{O}_{\P^1} (\ell))$ satisfies property $\QR (3)$ for all $\ell \geq 2$. In this way, we were able to prove the statements of Theorem \ref{thm:main 1} one by one using Theorem \ref{thm:curve structure of rank}.

\begin{remark}
$(1)$ Theorem \ref{thm:curve structure of rank} leads us to define a new invariant of projective integral curves. To be precise, let $\tau (\mathcal{C})$ be the
smallest integer $\tau \geq 2g+2$ such that every line bundle
$\mathcal{L}$ on $\mathcal{C}$ of degree $\tau$ satisfies property
$\QR(3)$. By Theorem \ref{thm:main 1} and Theorem \ref{thm:curve structure of rank}, we know the followings:\\
\begin{enumerate}
\item[$(i)$] When $g=0,1$, it holds that $\tau (\mathcal{C}) = 2g+2$.
\item[$(ii)$] When $g \geq 2$, it holds that $2g+2 \leq \tau (\mathcal{C}) \leq 4g+4$. In particular, $\tau
(\mathcal{C})$ does exist.
\item[$(iii)$] Every line bundle on $\mathcal{C}$ of degree $\geq \tau (\mathcal{C})$
satisfies property $\QR(3)$.\\
\end{enumerate}

\noindent It will be a natural problem to find an upper bound of $\tau (\mathcal{C})$ which is better than the number $4g+4$ obtained in Theorem \ref{thm:main 1}.

\noindent $(2)$ Let $\mathcal{C}$ be a projective complex smooth curve of genus $g \geq 3$ which is not  a hyperelliptic, a plane quintic or a trigonal curve. Then its canonical embedding $\mathcal{C} \subset \P^{g-1}$ satisfies property $\QR(4)$ (cf. \cite{Gre}). By \cite[Example 5.4 and 5.8]{HLMP}, infinitely many canonical curves satisfy property $\QR(3)$. On the other hand, there is a canonical curve $\mathcal{C} \subset \P^5$ of genus $6$ which fails to satisfy property $\QR(3)$. For details, see \cite[Example 6.2]{HLMP}.
\end{remark}

\section{Proof of Theorem \ref{thm:curve structure of rank}}

\noindent This section is devoted to giving a proof of Theorem \ref{thm:curve structure of
rank}. We begin with a few elementary facts.

\begin{notation and remarks}\label{nar:curve inner projection}
Let $\mathcal{C}$ be a projective integral curve of arithmetic genus
$g$ and let $\mathcal{L}$ be a line bundle on $\mathcal{C}$ of degree $d
\geq 2g+3$, defining an embedding
\begin{equation*}
\mathcal{C}_0 := \phi_{|\mathcal{L}|} (\mathcal{C}) \rightarrow \P H^0 (\mathcal{C},\mathcal{L}) = \P^{d-g} .
\end{equation*}
Also let $p_1$ and $p_2$ be two distinct smooth points of $\mathcal{C}$. We denote by $\mathbb{L}$ the line through $p_1$ and $p_2$ in $\P^{d-g}$.

\noindent $(1)$ Note that the line bundles $\mathcal{L}$, $\mathcal{L}(-p_1)$, $\mathcal{L}(-p_2)$ and
$\mathcal{L}(-p_1 -p_2)$ are very ample and define projectively
normal embedding (cf. \cite{C, M, F}). Furthermore, $\mathcal{L}$,
$\mathcal{L} (-p_1)$ and $\mathcal{L}(-p_2 )$ satisfy property $\QR(4)$ (cf. \cite{SD}).
\smallskip

\noindent $(2)$ Consider the cones
\begin{equation*}
S_1 = \mbox{Join}(p_1 , \mathcal{C}_0 ),~S_2 = \mbox{Join}(p_2 ,
\mathcal{C}_0 ), ~T = \mbox{Join}(\mathbb{L} , \mathcal{C}_0 ) \subset
\P^{d-g}.
\end{equation*}
The bases of the cones $S_1$, $S_2$ and $T$ can be regarded respectively as the
linearly normal curves
\begin{equation*}
\mathcal{C}_1 := \phi_{|\mathcal{L}(-p_1 )|} (\mathcal{C}) \subset \P H^0 (\mathcal{L} (-p_1 )) = \P^{d-g-1} ,
\end{equation*}
\begin{equation*}
\mathcal{C}_2 := \phi_{|\mathcal{L}(-p_2 )|} (\mathcal{C}) \subset \P H^0 (\mathcal{L} (-p_2 )) = \P^{d-g-1} ,
\end{equation*}
and
\begin{equation*}
\mathcal{C}_{12}:= \phi_{|\mathcal{L}(-p_1 -p_2 )|} (\mathcal{C}) \subset \P H^0 (\mathcal{L} (-p_1 - p_2 ))= \P^{d-g-2}.
\end{equation*}
\smallskip

\noindent $(3)$ Let the homogeneous ideals of $\mathcal{C}_0$, $S_1$, $S_2$
and $T$ in $\P^{d-g}$ be respectively $I(\mathcal{C }_0 )$, $I(S_1 )$, $I(S_2 )$ and $I(T)$. There are the following inclusions:
\begin{equation*}
I(\mathcal{C }_0 )_2 ~\supseteq~  V := I( S_1  )_2 + I( S_2)_2
~\supseteq~  I( S_1  )_2 \cap I( S_2)_2 ~\supseteq~  I(T)_2
\end{equation*}
\end{notation and remarks}

\begin{lemma}\label{lem:dimension counting}
Keep the notations in Notation and Remarks \ref{nar:curve inner
projection}. Then
\begin{equation*}
I( S_1  )_2 \cap I( S_2)_2 =  I( T )_2 .
\end{equation*}
and
\begin{equation*}
{\rm dim}_{\Bbbk}~V = {\rm dim}_{\Bbbk}~ I(\mathcal{C}_0)_2 -1 .
\end{equation*}
\end{lemma}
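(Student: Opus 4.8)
The plan is to prove the two assertions in sequence, first establishing the set-theoretic equality $I(S_1)_2 \cap I(S_2)_2 = I(T)_2$ of degree-two pieces, and then using it together with an exact sequence to compute the dimension of $V$. For the first equality, the inclusion $I(T)_2 \subseteq I(S_1)_2 \cap I(S_2)_2$ is already recorded in Notation and Remarks \ref{nar:curve inner projection}(3), so the content is the reverse inclusion. The geometric idea is that $T = \mbox{Join}(\mathbb{L}, \mathcal{C}_0)$ is the join of the line $\mathbb{L}$ with the curve, while $S_i = \mbox{Join}(p_i, \mathcal{C}_0)$; since $\mathbb{L}$ is the line spanned by $p_1$ and $p_2$, any quadric vanishing on both cones $S_1$ and $S_2$ should be forced to vanish on the entire join $T$. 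Concretely, I would choose coordinates so that $p_1 = [1:0:\cdots:0]$ and $p_2 = [0:1:0:\cdots:0]$, and analyze what it means for a quadric $Q$ to lie in $I(S_i)_2$: vanishing on the cone over $\mathcal{C}_0$ with vertex $p_i$ means $Q$ has no monomials involving the vertex variable, i.e. $Q$ is a quadric in the variables transverse to $p_i$. Imposing this for both $i=1$ and $i=2$ removes all monomials involving $x_0$ or $x_1$, which is precisely the condition that $Q$ vanishes on the join of the line $\mathbb{L} = \langle p_1, p_2\rangle$ with $\mathcal{C}_0$, i.e. $Q \in I(T)_2$.

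For the dimension formula, the key is to relate $V = I(S_1)_2 + I(S_2)_2$ to $I(\mathcal{C}_0)_2$ via the standard inclusion–exclusion identity
\begin{equation*}
\dim_{\Bbbk} V = \dim_{\Bbbk} I(S_1)_2 + \dim_{\Bbbk} I(S_2)_2 - \dim_{\Bbbk}\bigl(I(S_1)_2 \cap I(S_2)_2\bigr),
\end{equation*}
where the intersection term is now $\dim_{\Bbbk} I(T)_2$ by the first part. So the task reduces to computing each of $\dim_{\Bbbk} I(S_i)_2$ and $\dim_{\Bbbk} I(T)_2$, and comparing their combination with $\dim_{\Bbbk} I(\mathcal{C}_0)_2$. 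I would compute these dimensions using the identifications in Notation and Remarks \ref{nar:curve inner projection}(2): the quadrics on the cone $S_i$ correspond to quadrics on its base curve $\mathcal{C}_i = \phi_{|\mathcal{L}(-p_i)|}(\mathcal{C})$ in $\P^{d-g-1}$, and those on $T$ to quadrics on $\mathcal{C}_{12} = \phi_{|\mathcal{L}(-p_1-p_2)|}(\mathcal{C})$ in $\P^{d-g-2}$. Because all of $\mathcal{L}$, $\mathcal{L}(-p_1)$, $\mathcal{L}(-p_2)$, $\mathcal{L}(-p_1-p_2)$ are projectively normal (part (1)), the dimension $\dim_{\Bbbk} I(\mathcal{C})_2$ of the quadric space equals $\binom{r+2}{2} - h^0(\mathcal{C}, \mathcal{L}^{\otimes 2})$ for each embedding, and these $h^0$ values are computable by Riemann–Roch since the relevant degrees exceed $2g$.

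The main obstacle I anticipate is bookkeeping: once each $\dim_{\Bbbk} I(S_i)_2$ and $\dim_{\Bbbk} I(T)_2$ is expressed as the corresponding ambient binomial minus an $h^0$, I must verify that the alternating combination collapses to exactly $\dim_{\Bbbk} I(\mathcal{C}_0)_2 - 1$. The cone over a variety in $\P^N$ sitting inside $\P^{N+1}$ has the same degree-two ideal dimension as its base relative to the respective ambient spaces, but the ambient binomials differ by the number of monomials involving the cone vertex variables, so I must track these vertex-variable contributions carefully; the single ``$-1$'' discrepancy should emerge from the combinatorics of how the two vertices $p_1, p_2$ and their span interact. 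I would organize the computation by writing everything in terms of $r = d-g$ and the common value $h^0(\mathcal{C}, 2\mathcal{L}) = 2d - g + 1$ (valid since $2d \geq 2g+3 > 2g$), and then checking the identity holds after substituting the Riemann–Roch expressions for the line bundles of degrees $d$, $d-1$, $d-1$, and $d-2$; the arithmetic, while routine, is where an off-by-one error is most likely to creep in.
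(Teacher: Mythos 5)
Your proof of the first equality $I(S_1)_2 \cap I(S_2)_2 = I(T)_2$ takes a genuinely different route from the paper's. The paper argues by contradiction via a degree count: if some $Q$ in $I(S_1)_2 \cap I(S_2)_2$ failed to contain $T$, then $T \cap Q$ would be a surface of degree $2\deg(T) = 2(d-2)$ by Bezout (\cite[Theorem 7.7]{H}), yet it would contain the two surfaces $S_1$ and $S_2$ of total degree $2(d-1)$ --- impossible. Your coordinate-and-monomial analysis is more elementary: it needs no intersection theory and no knowledge of the degrees $\deg S_i = d-1$, $\deg T = d-2$, and as a by-product it proves the identifications $I(S_i)_2 \cong I(\mathcal{C}_i)_2$ and $I(T)_2 \cong I(\mathcal{C}_{12})_2$, which both you and the paper must invoke anyway in the dimension count (the paper cites them from Notation and Remarks \ref{nar:curve inner projection}(2)). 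Two points in your sketch should be stated precisely, though both are easily repaired: (i) ``vanishing on the cone with vertex $p_1$ means $Q$ has no monomials involving $x_0$'' needs justification --- write $Q = a x_0^2 + x_0 L + Q'$ with $L, Q'$ free of $x_0$ and restrict $Q$ to the lines joining $p_1$ to points of $\mathcal{C}_1$; this forces $a=0$ and $L|_{\mathcal{C}_1} = 0$, and only then does nondegeneracy of the linearly normal curve $\mathcal{C}_1$ give $L \equiv 0$; (ii) freeness from $x_0, x_1$ is not by itself ``precisely the condition'' for membership in $I(T)_2$ --- you must combine it with the fact that $Q$ vanishes on $\mathcal{C}_0$: a point of the plane $\langle \mathbb{L}, c \rangle$, $c \in \mathcal{C}_0$, has the form $s p_1 + t p_2 + u c$, where $Q$ evaluates to $u^2 Q(c) = 0$, so $Q$ vanishes on all of $T$. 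Your second half coincides with the paper's proof: inclusion--exclusion plus projective normality. The arithmetic you deferred does close up exactly as you predict: each projectively normal curve of degree $\delta$ and genus $g$ in $\P^{\delta-g}$ here has $\dim_{\Bbbk} I_2 = \binom{\delta-g}{2} - g$, and with $m = d-g$,
\begin{equation*}
2\left[\binom{m-1}{2} - g\right] - \left[\binom{m-2}{2} - g\right] \;=\; \binom{m}{2} - g - 1 \;=\; \dim_{\Bbbk} I(\mathcal{C}_0)_2 - 1 ,
\end{equation*}
which is the asserted formula.
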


\begin{proof}
To show the first equality, let us suppose that there is a quadric $Q \in I( S_1  )_2 \cap I( S_2)_2 - I( T )_2$. Then $T \cap Q$ contains $S_1$ and $S_2$ while
\begin{equation*}
2 \times \deg (T) = 2(d-2) < 2(d-1) = \deg (S_1 ) + deg (S_2 ).
\end{equation*}
This is a contradiction (cf. \cite[Theorem 7.7]{H}). In consequence, the desired equality $I( S_1  )_2 \cap I( S_2)_2 =  I( T )_2$ holds.

To show the second equality, consider the elementary formula
\begin{equation*}
{\rm dim}_{\Bbbk}~V = {\rm dim}_{\Bbbk}~I(S_1)_2 + {\rm
dim}_{\Bbbk}~I(S_2)_2 - {\rm dim}_{\Bbbk}~ I(S_1)_2 \cap I(S_2)_2 .
\end{equation*}
By Notation and Remarks \ref{nar:curve inner projection}.(2), we
have
\begin{equation*}
{\rm dim}_{\Bbbk}~I(S_i )_2 = {\rm dim}_{\Bbbk}~I(\mathcal{C}_i )_2
\end{equation*}
for each $i=1,2$ and
\begin{equation*}
{\rm dim}_{\Bbbk}~ I(S_1)_2 \cap I(S_2)_2 = {\rm dim}_{\Bbbk}~I(T
)_2 = {\rm dim}_{\Bbbk}~I(\mathcal{C}_{12} )_2 .
\end{equation*}
Also, $\mathcal{C}_0 \subset \P^{d-g}$, $\mathcal{C}_1 ,
\mathcal{C}_2 \subset \P^{d-g-1}$ and $\mathcal{C}_{12} \subset
\P^{d-g-2}$ are projectively normal curves and hence it holds that
\begin{equation*}
{\rm dim}_{\Bbbk}~I(\mathcal{C}_0 )_2 = {d-g \choose 2} -g ,
\end{equation*}
\begin{equation*}
{\rm dim}_{\Bbbk}~I(\mathcal{C}_1 )_2 = {\rm
dim}_{\Bbbk}~I(\mathcal{C}_2 )_2 = {d-g -1 \choose 2} -g
\end{equation*}
and
\begin{equation*}
{\rm dim}_{\Bbbk}~I(\mathcal{C}_{12} )_2 = {d-g-2 \choose 2} -g .
\end{equation*}
Therefore it follows that ${\rm dim}_{\Bbbk}~V = {d-g \choose 2}
-g-1 = {\rm dim}_{\Bbbk}~ I(\mathcal{C}_0) -1$.
\end{proof}

Now, we are ready to prove Theorem \ref{thm:curve structure of rank}.\\

\noindent {\bf Proof of Theorem \ref{thm:curve structure of rank}.}
Firstly, note that it suffices to prove the following statement $(\ast)$.\\

\begin{enumerate}
\item[$(\ast)$] $\mathcal{L}$ satisfies property $\QR(3)$ for every line bundle $\mathcal{L}$ of degree
$\tau +1$.\\
\end{enumerate}

\noindent Let $\mathcal{L}$ be a line bundle on $\mathcal{C}$ of degree $\tau +1$, defining a linearly normal embedding
\begin{equation*}
\mathcal{C}_0 \subset \P H^0 (\mathcal{L}) = \P^{\tau +1-g}.
\end{equation*}
Choose three distinct smooth points $p_1$, $p_2$ and $p_3$ of $\mathcal{C}$. For $i=1,2,3$, let
\begin{equation*}
S_i := \mbox{Join}(p_i , \mathcal{C}_0 )  \subset  \P^{\tau+1 -g}.
\end{equation*}
So, a general hyperplane section of $S_i$ is isomorphic to the
linearly normal curve
\begin{equation*}
\mathcal{C}_i \subset \P H^0 (\mathcal{L} (-p_i )) = \P^{\tau -g}
\end{equation*}
of degree $\tau$ for each $i=1,2,3$. By our assumption, $I(\mathcal{C}_i)$ can be
generated by quadrics of rank $3$. Therefore $I(S_i )$ can be also generated by quadrics of rank $3$ since $S_i$ is a cone having $\mathcal{C}_i$ as a base. Now, we
will show that
\begin{equation}\label{eq:claim on spaning}
I(\mathcal{C}_0 )_2 = \langle I(S_1)_2 , I(S_2)_2 , I(S_3)_2
\rangle.
\end{equation}
Obviously, this completes the proof of $(\ast)$. To prove
(\ref{eq:claim on spaning}), first recall that
\begin{equation*}
{\rm dim}_{\Bbbk} ~ \langle I(S_1)_2 , I(S_2)_2 \rangle = {\rm
dim}_{\Bbbk} ~  I(\mathcal{C}_0)_2 -1
\end{equation*}
by Lemma \ref{lem:dimension counting}. Thus it suffices to check
that $I(S_3)_2$ is not contained in $\langle I(S_1)_2 , I(S_2)_2
\rangle$. Indeed, if $I(S_3)_2$ is a subspace of $\langle I(S_1)_2 ,
I(S_2)_2 \rangle$ then it holds that
\begin{equation*}
S_1 \cap S_2 \subset S_3.
\end{equation*}
In particular, the line $\langle p_1 , p_2 \rangle$ is contained in
$S_1 \cap S_2$ and hence in $S_3$. Then $\langle p_1 , p_2
, p_3 \rangle$ is also contained in $S(p_3 )$. Note that $\langle
p_1 , p_2 , p_3 \rangle$ cannot be a line since $\mathcal{C}_0$ is
cut out by quadrics and so it does not admit a tri-secant line. Therefore the irreducible surface $S_3$
contains the plane $\langle p_1 , p_2 , p_3 \rangle$, which is a
contradiction. In consequence, $I(S_3)_2$ is not contained in
$\langle I(S_1)_2 , I(S_2)_2 \rangle$ and hence we have
\begin{equation*}
{\rm dim}_{\Bbbk} ~ \langle I(S_1)_2 , I(S_2)_2 , I(S_3)_2 \rangle
\geq {\rm dim}_{\Bbbk} ~ \langle I(S_1)_2 , I(S_2)_2 \rangle +1 = {\rm dim}_{\Bbbk} ~
\langle I(\mathcal{C}_0)_2 .
\end{equation*}
It follows that the desired equality in (\ref{eq:claim on spaning}) holds. This completes the proof of Theorem \ref{thm:curve structure of rank}.  \qed \\

We finish this section by applying Theorem \ref{thm:curve structure
of rank} to the rational normal curves and elliptic normal curves.

\begin{corollary}\label{cor:rat nor curve}
$(\P^1 , \mathcal{O}_{\P^1} (d))$ satisfies property $\QR(3)$ for
all $d \geq 2$.
\end{corollary}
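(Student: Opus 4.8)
The plan is to reduce the entire family to the single base case $d=2$ and then invoke Theorem \ref{thm:curve structure of rank}. Since $\P^1$ has arithmetic genus $g=0$, the threshold appearing in that theorem is $2g+2 = 2$. Moreover, for each integer $d$ there is a unique line bundle of degree $d$ on $\P^1$, namely $\mathcal{O}_{\P^1}(d)$, so the hypothesis $(\dagger)$ at degree $\tau$ collapses to the single assertion that $(\P^1, \mathcal{O}_{\P^1}(\tau))$ satisfies $\QR(3)$. I would therefore take $\tau = 2$ and verify $(\dagger)$ directly.

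For the base case, I would observe that $\phi_{|\mathcal{O}_{\P^1}(2)|}$ embeds $\P^1$ as a smooth conic $\mathcal{C}_0 \subset \P^2$, with $\mathcal{C}_0 = \{[s^2 : st : t^2] \mid [s:t] \in \P^1\}$. Its homogeneous ideal is principal, generated by the single quadric $x_0 x_2 - x_1^2$. Writing this form as $\mathbf{x}^{\mathsf{T}} M \mathbf{x}$ with $M$ symmetric, one checks that $\det M \neq 0$ (when ${\rm char}~\Bbbk \neq 2$), so the quadric has rank $3$. Hence $I(\mathcal{C}_0)$ is generated by quadrics of rank at most $3$, that is, $(\P^1, \mathcal{O}_{\P^1}(2))$ satisfies $\QR(3)$, which is precisely condition $(\dagger)$ for $\tau = 2$.

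With the base case established, Theorem \ref{thm:curve structure of rank} applies with $\mathcal{C} = \P^1$ and $\tau = 2 \geq 2g+2$, and yields that every line bundle on $\P^1$ of degree $\geq 2$ satisfies $\QR(3)$. Since these are exactly the bundles $\mathcal{O}_{\P^1}(d)$ with $d \geq 2$, this gives the desired conclusion. The only point requiring genuine verification is the base case, and there the rank-$3$ property of the conic is immediate; all of the inductive content is carried by Theorem \ref{thm:curve structure of rank}, so I anticipate no serious obstacle.
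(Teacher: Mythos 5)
Your proposal is correct and follows exactly the paper's own argument: verify the base case $d=2$ (the plane conic, a rank-$3$ quadric) and then apply Theorem \ref{thm:curve structure of rank} with $\tau = 2 = 2g+2$. The only difference is that you spell out the rank computation for $x_0x_2 - x_1^2$ that the paper dismisses as ``obviously true,'' which is a harmless (indeed welcome) elaboration.
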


\begin{proof}
When $g=0$, it is obviously true that $(\P^1 , \mathcal{O}_{\P^1}
(2))$ satisfies property $\QR(3)$. Then Theorem \ref{thm:curve
structure of rank} implies that $(\P^1 , \mathcal{O}_{\P^1} (d))$
satisfies property $\QR(3)$ for all $d \geq 2$.
\end{proof}

\begin{remark}
Corollary \ref{cor:rat nor curve} is firstly proved in \cite[Corollary
2.4]{HLMP} where a rank $3$ generators are explicitly given. Here we provide another proof, which shows that every rational normal curve satisfies property $\QR(3)$ since $(\P^1 , \mathcal{O}_{\P^1}
(2))$ corresponds to a plane conic curve defined by a quadric of rank $3$.
\end{remark}

\begin{corollary}\label{cor:elliptic nor curve}
Let $\mathcal{C}$ be a projective integral curve of arithmetic genus
$1$ and $\mathcal{L}$ a line bundle on $\mathcal{C}$. If $d \geq 4$, then $(\mathcal{C},\mathcal{L})$ satisfies property $\QR (3)$.
\end{corollary}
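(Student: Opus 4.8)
The plan is to reduce the statement to an application of Theorem \ref{thm:curve structure of rank}. That theorem says precisely that if there is \emph{some} integer $\tau \geq 2g+2$ such that every degree-$\tau$ line bundle on $\mathcal{C}$ satisfies property $\QR(3)$, then every line bundle of degree $\geq \tau$ does too. Here $g=1$, so $2g+2 = 4$, and the natural candidate is $\tau = 4$. Thus it suffices to verify condition $(\dagger)$ for $\tau = 4$: every line bundle $\mathcal{L}$ on $\mathcal{C}$ of degree exactly $4$ satisfies property $\QR(3)$. Once this base case is established, Theorem \ref{thm:curve structure of rank} immediately upgrades it to all degrees $d \geq 4$, which is exactly the claim.

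So the heart of the proof is the base case $d = 4$. A degree-$4$ line bundle on an arithmetic-genus-$1$ curve is very ample and projectively normal (since $d = 4 \geq 2g+1 = 3$), embedding $\mathcal{C}$ as a linearly normal curve $\mathcal{C}_0 \subset \P H^0(\mathcal{L}) = \P^{d-g} = \P^3$ of degree $4$. By Lemma \ref{lem:dimension counting} (or the projective-normality dimension count used there), $\dim_{\Bbbk} I(\mathcal{C}_0)_2 = \binom{d-g}{2} - g = \binom{3}{2} - 1 = 2$, so $\mathcal{C}_0$ is cut out by a pencil of quadrics in $\P^3$ — a standard elliptic quartic curve, the complete intersection of two quadric surfaces. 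The task is then to show this pencil can be spanned by two quadrics of rank $3$.

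The main obstacle is therefore the concrete rank analysis of a pencil of quadrics in $\P^3$. The key step is to show that a general member of the pencil has rank $3$, equivalently that the pencil contains at least two (independent) quadric \emph{cones}. I would analyze the discriminant of the pencil: the locus of degenerate quadrics in a pencil on $\P^3$ is cut out by the vanishing of a degree-$4$ determinant, so generically there are four distinct singular members, each a rank-$3$ cone (rank-$\leq 2$ members would force reducibility or a line in the base locus, contradicting that the smooth elliptic quartic is irreducible and nondegenerate). Since char $\Bbbk \neq 2$ is assumed, the symmetric-matrix/quadratic-form correspondence is available and the discriminant is not identically degenerate. Picking two distinct rank-$3$ members of the pencil then gives a rank-$3$ basis, establishing $\QR(3)$ for $d=4$; the generic position argument (that the base case holds for \emph{every} degree-$4$ bundle, not just a general one) must be checked uniformly, but all such bundles give the same geometric picture of an elliptic quartic in $\P^3$.

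Alternatively, and perhaps more cleanly, I would invoke Theorem \ref{thm:curve structure of rank} after verifying $\QR(3)$ for a single carefully chosen model, but since the theorem's hypothesis $(\dagger)$ quantifies over \emph{all} degree-$\tau$ bundles, the pencil-of-quadrics argument above must be made to work for an arbitrary degree-$4$ line bundle; fortunately every such bundle yields the same type of complete-intersection elliptic quartic, so the discriminant count applies verbatim in each case.
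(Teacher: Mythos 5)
Your reduction to the degree-$4$ base case via Theorem \ref{thm:curve structure of rank}, and your treatment of the smooth case, coincide with the paper's proof: the paper also realizes $\mathcal{C} \subset \P^3$ as a complete intersection of a pencil of quadrics and, for smooth $\mathcal{C}$, cites Harris (Proposition 22.34) for the fact that the pencil meets the rank-$\leq 3$ hypersurface $W \subset \P^5$ in exactly four points, each giving a rank-$3$ quadric. Your exclusion of rank-$\leq 2$ members (a nondegenerate integral curve cannot lie in a union of two planes) is also correct. Up to here the two arguments agree.

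The gap is in the singular case, which your last paragraph dismisses with the claim that ``the discriminant count applies verbatim.'' It does not. An integral curve of arithmetic genus $1$ may be a nodal or cuspidal quartic, and for those the discriminant of the pencil has repeated roots. Concretely, with the normal forms the paper takes from \cite[Theorem 1.1]{LPS}: in the cuspidal case $Q_1 = x_0^2 + x_1x_2$, $Q_2 = x_3^2 + x_0x_2$, the discriminant of $\lambda Q_1 + \mu Q_2$ is proportional to $\lambda^3 \mu$, so the pencil has only \emph{two} distinct singular members; in the nodal case it is proportional to $\lambda^2\mu(\mu - 4\lambda)$, giving three. So ``four distinct singular members'' is simply false there, and what your argument actually requires is the weaker statement that there are at least \emph{two} distinct singular members. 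But that weaker statement cannot be extracted from ``the discriminant is a nonzero binary quartic'' alone: a pencil of quadrics in $\P^3$ can have a discriminant with a single quadruple root (e.g.\ Segre symbol $[4]$, where the base locus is a twisted cubic plus a tangent line), in which case the pencil contains only one singular member and the argument collapses. Ruling this out for pencils whose base locus is an \emph{integral} curve of arithmetic genus $1$ is precisely the classification input that the paper gets from \cite[Theorem 1.1]{LPS}, which exhibits explicit pairs of rank-$3$ generators in the nodal and cuspidal cases. Without that (or an equivalent Segre-symbol analysis showing the admissible symbols are only $[1,1,1,1]$, $[2,1,1]$, $[3,1]$), your proof covers only smooth $\mathcal{C}$.
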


\begin{proof}
By Theorem \ref{thm:curve structure of rank}, it suffices to show that
$(\mathcal{C},\mathcal{L})$ satisfies property $\QR(3)$ for every
line bundle $\mathcal{L}$ on $\mathcal{C}$ of degree $4$. Now, let
$\mathcal{L}$ be a line bundle of degree $4$ on $\mathcal{C}$. Then
\begin{equation*}
\mathcal{C} \subset \P H^0 (\mathcal{L}) = \P^3
\end{equation*}
is a complete intersection of two quadrics, say $Q_1$ and $Q_2$. Let
$\P^5$ denote the projective space of quadrics in $\P^3$. Also let
$W \subset \P^5$ be the locus of quadrics of rank $\leq 3$. Then $W$
is a hypersurface of degree $4$.

When $\mathcal{C}$ is smooth, the line $\overline{Q_1 , Q_2}$ in $\P^5$ intersects with $W$
exactly at four points (cf. Proposition 22.34 in \cite{Harris}).
Those four intersection points correspond to quadratic equations of
rank $3$ in $I(\mathcal{C})$. This shows that
$(\mathcal{C},\mathcal{L})$ satisfies property $\QR(3)$.

Now, suppose that $\mathcal{C}$ is singular. By \cite[Theorem 1.1]{LPS},
we may assume that
\begin{equation*}
Q_1 = x_0 ^2 + x_1 x_2 \quad \mbox{and} \quad Q_2 = x_3 ^2 + x_0 x_2
\end{equation*}
if $\mathcal{C}$ has the cusp singularity, and
\begin{equation*}
Q_1 = x_0 ^2 + x_1 x_2 \quad \mbox{and} \quad Q_2 = x_3 ^2 + x_0 x_2
+ x_0 x_3
\end{equation*}
if $\mathcal{C}$ has the nodal singularity. This shows that
$(\mathcal{C},\mathcal{L})$ satisfies property $\QR(3)$ when
$\mathcal{C}$ is singular. As was mentioned at the beginning, this
completes the proof.
\end{proof}

\begin{remark}
In \cite[Theorem IV.1.3]{Hulek}, the author proved that every elliptic normal curve of degree $\geq 4$ is the scheme-theoretic intersection of the quadrics of rank $3$ which contains it. Corollary \ref{cor:elliptic nor curve} shows that an ideal-theoretic version of the statement of \cite[Theorem IV.1.3]{Hulek} holds also.
\end{remark}

\section{Proof of Theorem \ref{thm:main 1}}

\noindent Here we will complete the proof of Theorem \ref{thm:main 1} by verifying the following two statements.

\begin{theorem}\label{thm:main 2}
Suppose that ${\rm char}~\Bbbk \neq 2,3$. Let $\mathcal{C}$ be a projective integral curve of arithmetic genus
$g \geq 2$ and $\mathcal{L}$ a line bundle on $\mathcal{C}$ of degree $d$. If $d \geq 4g+4$, then $(\mathcal{C},\mathcal{L})$ satisfies property $\QR (3)$.
\end{theorem}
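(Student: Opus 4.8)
The plan is to deduce the theorem from the structure result, Theorem~\ref{thm:curve structure of rank}, exactly as in the rational and elliptic cases, so that the entire range $d \geq 4g+4$ collapses to a single base degree. Since $4g+4 \geq 2g+2$ for every $g \geq 2$ and ${\rm char}~\Bbbk \neq 2$, Theorem~\ref{thm:curve structure of rank} applies with $\tau = 4g+4$. It therefore suffices to verify hypothesis $(\dagger)$ at this value, namely that \emph{every} line bundle $\mathcal{L}$ on $\mathcal{C}$ of degree exactly $4g+4$ satisfies property $\QR(3)$; once this one base case is established, Theorem~\ref{thm:curve structure of rank} promotes it to all degrees $\geq 4g+4$, which is the assertion we want.

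For the base case I would start from a determinantal presentation. Because $4g+4 \geq 4g+2$, the theorem of Eisenbud, Koh and Stillman \cite{EKS} guarantees that every such $\mathcal{L}$ is determinantally presented: the space $I(\mathcal{C})_2$ is spanned by the $2\times 2$ minors of a $1$-generic matrix $M$ of linear forms on $\P H^0(\mathcal{C},\mathcal{L}) = \P^{3g+4}$. This already yields property $\QR(4)$, recovering Saint-Donat's bound \cite{SD}, since a generic minor $\ell_1\ell_4 - \ell_2\ell_3$ has rank $4$. The remaining, and genuinely new, task is a \emph{rank reduction}: to show that $I(\mathcal{C})_2$ is in fact spanned by its members of rank $\leq 3$.

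To manufacture rank-$3$ quadrics I would exploit a factorization $\mathcal{L} \cong \mathcal{A}\otimes\mathcal{B}$ with $\deg\mathcal{A} = \deg\mathcal{B} = 2g+2$ (take any $\mathcal{A}$ of degree $2g+2$ and set $\mathcal{B} = \mathcal{L}\otimes\mathcal{A}^{-1}$). Both factors have degree $\geq 2g+1$, hence are very ample and projectively normal, so the multiplication $H^0(\mathcal{A})\otimes H^0(\mathcal{B}) \to H^0(\mathcal{L})$ is surjective and the determinantal quadrics appear concretely as Segre-type relations $(su)(tv) - (sv)(tu)$. The goal is then to recombine these rank-$4$ relations into rank-$3$ quadrics of the shape $\ell^2 - \ell'\ell''$ — as already happens for the Hankel minors $x_{i-1}x_{i+1} - x_i^2$ of a rational normal curve — and to check, using the projective-normality value $\dim_{\Bbbk} I(\mathcal{C})_2 = \binom{d-g}{2} - g$ from the proof of Lemma~\ref{lem:dimension counting}, that the rank-$3$ quadrics so produced span all of $I(\mathcal{C})_2$.

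I expect the rank-reduction step to be the main obstacle and to be exactly where the hypotheses ${\rm char}~\Bbbk \neq 2,3$ are forced. In the genus-$1$ case (Corollary~\ref{cor:elliptic nor curve}) the full system of quadrics is only a pencil, meeting the degree-$4$ rank-$\leq 3$ discriminant in four points, so a direct discriminant count settles the matter; here $\dim_{\Bbbk} I(\mathcal{C})_2$ grows with $g$ while the rank-$\leq 3$ locus has large codimension, so no naive count is available and the rank-$3$ quadrics must be produced explicitly from the determinantal structure. The explicit combinations realizing a chosen minor as a rank-$3$ quadric require inverting $2$ and $3$ (the $2$ from the symmetric-form theory underlying quadrics in characteristic $\neq 2$, the $3$ from the normalization of the rank-$3$ model), and this is the likely source of the characteristic restriction.
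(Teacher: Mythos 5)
Your first step---invoking Theorem~\ref{thm:curve structure of rank} with $\tau = 4g+4$ to reduce everything to the single base case of degree exactly $4g+4$---is exactly right and matches the paper. The gap is in the base case itself: you never actually produce the rank-$3$ quadrics, and the route you sketch would not produce them. You factor $\mathcal{L} \cong \mathcal{A}\otimes\mathcal{B}$ with $\mathcal{A}$ \emph{arbitrary} of degree $2g+2$ and $\mathcal{B} = \mathcal{L}\otimes\mathcal{A}^{-1}$; since in general $\mathcal{A} \not\cong \mathcal{B}$, the Segre-type relations $(su)(tv)-(sv)(tu)$ you obtain are honestly of rank $4$, and there is no mechanism for ``recombining'' them into quadrics $\ell^2 - \ell'\ell''$: that shape requires two entries of the matrix to coincide, which is precisely what an asymmetric factorization forbids. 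You acknowledge this yourself (``I expect the rank-reduction step to be the main obstacle''), but an expectation is not a proof, and the rank reduction is the entire content of the theorem beyond Saint-Donat's $\QR(4)$.

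The two missing ideas, which constitute the paper's actual proof, are these. First, one should take $\mathcal{A} = \mathcal{B}$, i.e.\ write $\mathcal{L} = \mathcal{M}^2$ as a genuine square of a line bundle $\mathcal{M}$ of degree $2g+2$. This is possible because $\operatorname{Pic}^0(\mathcal{C})$ is a divisible group (Remark~\ref{rmk:curve important}.(1), a nontrivial fact for singular integral curves, resting on the structure of the generalized Jacobian): writing $\mathcal{L}\otimes\mathcal{M}_0^{-2} = \mathcal{N}^2$ for any fixed $\mathcal{M}_0$ of degree $2g+2$, one sets $\mathcal{M} = \mathcal{M}_0\otimes\mathcal{N}$. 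The symmetry is what creates rank-$3$ quadrics: for sections $s,t \in H^0(\mathcal{M})$ the ``minor'' $(s^2)(t^2)-(st)^2$ has rank at most $3$. Second, the statement that these symmetric quadrics span $I(\mathcal{C})_2$ --- the rank reduction --- is not reproved in the paper but quoted as \cite[Corollary 5.2]{HLMP}: if $(\mathcal{C},\mathcal{M})$ satisfies $\QR(4)$ (which it does by \cite{SD} since $\deg\mathcal{M} = 2g+2$) and ${\rm char}~\Bbbk \neq 2,3$, then $(\mathcal{C},\mathcal{M}^2)$ satisfies $\QR(3)$. So the characteristic hypothesis enters through that citation, not through the heuristic normalizations you guessed; and the determinantal theorem of \cite{EKS}, which your proposal leans on, plays no role in the paper's argument. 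Without the square trick and the HLMP input (or an independent proof of it), your proposal does not close.
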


\begin{theorem}\label{thm:main 3}
Suppose that ${\rm char}~\Bbbk = 0$ and $\mathcal{C}$ is a smooth projective curve of genus $g \geq 7$ and general in the moduli $\mathcal{M}_g$. If $\mathcal{L}$ is a line bundle on $\mathcal{C}$ of degree $d \geq 4g+2$,
then $(\mathcal{C},\mathcal{L})$ satisfies property $\QR (3)$.
\end{theorem}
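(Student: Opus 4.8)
The plan is to reduce to one critical degree and then exploit a square root of $\mathcal{L}$. Since ${\rm char}~\Bbbk = 0 \neq 2$ and $4g+2 \geq 2g+2$, Theorem \ref{thm:curve structure of rank} shows it suffices to prove that \emph{every} line bundle of degree exactly $4g+2$ satisfies property $\QR(3)$. Fix such an $\mathcal{L}$. Because the squaring map ${\rm Pic}^{2g+1}(\mathcal{C}) \to {\rm Pic}^{4g+2}(\mathcal{C})$ is surjective, I may write $\mathcal{L} = \mathcal{M}^{\otimes 2}$ for a line bundle $\mathcal{M}$ of degree $2g+1$; here the factorization $4g+2 = 2(2g+1)$ is what distinguishes this theorem from Theorem \ref{thm:main 2}, where one instead uses $4g+4 = 2(2g+2)$. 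As $\deg\mathcal{M} = 2g+1$, the bundle $\mathcal{M}$ is very ample and projectively normal, so every multiplication map $\pi_k : {\rm Sym}^k H^0(\mathcal{M}) \to H^0(\mathcal{M}^{\otimes k})$ is surjective; write $\pi = \pi_2$ and let $\mathcal{C}_0 \subset \P H^0(\mathcal{L})$ be the embedding by $\mathcal{L}$.

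Next I would produce the rank-$3$ quadrics. For $u,v \in H^0(\mathcal{M})$ set $Q_{u,v} := \pi(u^2)\,\pi(v^2) - \pi(uv)^2 \in {\rm Sym}^2 H^0(\mathcal{L})$. Its image under the multiplication $\mu: {\rm Sym}^2 H^0(\mathcal{L}) \to H^0(\mathcal{L}^{\otimes 2}) = H^0(\mathcal{M}^{\otimes 4})$ is $u^2 v^2 - (uv)^2 = 0$, so $Q_{u,v} \in I(\mathcal{C}_0)_2$; and since $\pi(u^2),\pi(uv),\pi(v^2)$ are three sections of $\mathcal{L}$, the quadric $Q_{u,v}$ has rank $\leq 3$, with equality when $u,v$ are independent. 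These are exactly the images under ${\rm Sym}^2\pi$ of the Veronese relations $u^2 v^2 - (uv)^2$, which span the kernel of $m : {\rm Sym}^2{\rm Sym}^2 H^0(\mathcal{M}) \to {\rm Sym}^4 H^0(\mathcal{M})$; here I use ${\rm char}~\Bbbk = 0$, so that this kernel is an irreducible $\mathrm{GL}(H^0(\mathcal{M}))$-module and is therefore spanned by the rank-$3$ relations. Hence the span of $\{Q_{u,v}\}$ equals $({\rm Sym}^2\pi)(\ker m)$.

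It then remains to prove the spanning statement $({\rm Sym}^2\pi)(\ker m) = I(\mathcal{C}_0)_2$. I would deduce this from the commutative square $\mu\circ{\rm Sym}^2\pi = \pi_4\circ m$ by a diagram chase. Writing $(I_\mathcal{M})_k := \ker\pi_k$ for the degree-$k$ component of the ideal of $\mathcal{C}$ in $\P H^0(\mathcal{M})$, one has $\ker\pi_4 = (I_\mathcal{M})_4$ and $m(\ker{\rm Sym}^2\pi) = (I_\mathcal{M})_2\cdot{\rm Sym}^2 H^0(\mathcal{M})$, so that surjectivity of ${\rm Sym}^2\pi$ and of $\pi_4$ reduces the desired equality to the single syzygy-type condition
\begin{equation*}
(I_\mathcal{M})_4 = (I_\mathcal{M})_2 \cdot {\rm Sym}^2 H^0(\mathcal{M}),
\end{equation*}
i.e. that the ideal of $\mathcal{C} \subset \P H^0(\mathcal{M})$ is quadratically generated in degree $4$. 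This holds in particular whenever $\mathcal{M}$ satisfies property $N_1$.

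The hard part will be establishing this last condition for a degree-$(2g+1)$ bundle. For an arbitrary curve only $\deg\mathcal{M} \geq 2g+2$ forces quadratic generation (Saint-Donat, Fujita), which is exactly why Theorem \ref{thm:main 2} must start from $4g+4$; gaining the extra unit so as to reach $4g+2$ requires that $\mathcal{M}$, of the borderline degree $2g+1$, remain quadratically generated through degree $4$. This is where the hypotheses enter: for a \emph{general} curve the syzygies of degree-$(2g+1)$ bundles are governed by Brill--Noether theory and by the vanishing of the relevant Koszul cohomology, and I expect the numerical threshold for the needed vanishing to be precisely $g \geq 7$ in ${\rm char}~0$. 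Proving that a suitable square root $\mathcal{M}$ of every degree-$(4g+2)$ bundle on a general curve of genus $g \geq 7$ satisfies $(I_\mathcal{M})_4 = (I_\mathcal{M})_2\cdot{\rm Sym}^2 H^0(\mathcal{M})$ is the crux of the argument, and is the step I would expect to occupy the bulk of the proof.
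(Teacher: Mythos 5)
Your first half is sound and in fact runs parallel to the paper: the reduction to degree exactly $4g+2$ via Theorem \ref{thm:curve structure of rank}, the factorization $\mathcal{L}=\mathcal{M}^2$ with $\deg \mathcal{M}=2g+1$ from divisibility of ${\rm Pic}^0(\mathcal{C})$, and the passage from ``$I(\mathcal{C}\subset \P H^0(\mathcal{M}))$ is quadratically generated'' to ``$(\mathcal{C},\mathcal{M}^2)$ satisfies $\QR(3)$'' are all there; your diagram chase with the rank-$3$ quadrics $Q_{u,v}$ is essentially a hand-made proof of the ingredient the paper just cites, namely \cite[Corollary 5.2]{HLMP} (Remark \ref{rmk:curve important}.(2)). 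So the correct intermediate statement is reached: one needs a square root $\mathcal{M}$ of $\mathcal{L}$ whose ideal is generated by quadrics.

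But the step you yourself flag as ``the crux'' is genuinely missing, and the mechanism you propose for it --- a Brill--Noether/Koszul-cohomology vanishing for degree-$(2g+1)$ bundles on a general curve, with $g\geq 7$ as a vanishing threshold --- cannot work as stated. No statement valid for \emph{all} degree-$(2g+1)$ bundles on a general curve is available: by Green--Lazarsfeld \cite{GL} (Remark \ref{rmk:curve important}.(3)), $\omega_{\mathcal{C}}\otimes\mathcal{O}_{\mathcal{C}}(D)$ fails quadratic generation for every effective divisor $D$ of degree $3$, and such bundles exist on every curve in every genus; in particular, if $\mathcal{L}=(\omega_{\mathcal{C}}\otimes\mathcal{O}_{\mathcal{C}}(D))^{2}$, then the obvious square root is bad, so ``suitable'' cannot be dodged. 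The paper's actual argument exploits the fact that square roots of $\mathcal{L}$ form a torsor under the $2$-torsion subgroup of ${\rm Pic}^0(\mathcal{C})$: choose a non-trivial $2$-torsion bundle $\mathcal{E}$ and consider the two square roots $\mathcal{M}$ and $\mathcal{M}\otimes\mathcal{E}$. If both failed quadratic generation, Green--Lazarsfeld would give $\mathcal{M}=\omega_{\mathcal{C}}\otimes\mathcal{O}_{\mathcal{C}}(D)$ and $\mathcal{M}\otimes\mathcal{E}=\omega_{\mathcal{C}}\otimes\mathcal{O}_{\mathcal{C}}(D')$ with $D,D'$ effective of degree $3$, so that $\mathcal{O}_{\mathcal{C}}(D'-D)=\mathcal{E}$ is a non-trivial torsion bundle; Kempf's theorem \cite{K} --- and this is precisely where ${\rm char}~\Bbbk=0$ and generality in $\mathcal{M}_g$ enter --- then forces $H^1(\mathcal{C},\mathcal{O}_{\mathcal{C}}(D+D'))=0$, whence by Riemann--Roch $h^0(\mathcal{C},\mathcal{O}_{\mathcal{C}}(D+D'))=6+1-g\geq 1$, i.e.\ $g\leq 6$, contradicting $g\geq 7$. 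So the threshold $g\geq 7$ comes from Riemann--Roch on a degree-$6$ divisor after Kempf's non-speciality theorem, not from a Koszul vanishing; without this $2$-torsion twisting argument (or an equivalent), your proposal does not prove the theorem.
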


We begin with a few remarks.

\begin{remark}\label{rmk:curve important}
Let $\mathcal{C}$ be a projective integral curve of arithmetic genus $g \geq 1$.

\noindent $(1)$ The group $\mbox{Pic}^0 ( \mathcal{C})$ is known to be divisible. For details, we refer the
reader to see \cite[V.$\S~3$]{Se}. Indeed, let $\pi : \tilde{\mathcal{C}} \rightarrow \mathcal{C}$ be the normalization of $\mathcal{C}$ and let $\tilde{g}$ denote the genus of $\tilde{\mathcal{C}}$. Then the following sequence is exact:
\begin{equation*}
0 \rightarrow \pi_* \left( \mathcal{O}_{\tilde{\mathcal{C}}} ^* /
\mathcal{O}_{\mathcal{C}} ^* \right) \rightarrow \mbox{Pic}^0
(\mathcal{C}) \rightarrow \mbox{Pic}^0
(\tilde{\mathcal{C}})\rightarrow 0
\end{equation*}
The third term $\mbox{Pic}^0 (\tilde{\mathcal{C}})$ is an Abelian variety of dimension $\tilde{g}$. In particular, it is divisible. Also the first term can be written as
\begin{equation*}
\pi_* \left( \mathcal{O}_{\tilde{\mathcal{C}}} ^* /
\mathcal{O}_{\mathcal{C}} ^* \right) = \bigoplus_{P \in \mathcal{C}}
\tilde{\mathcal{O}}_P ^* /\mathcal{O}_P ^*
\end{equation*}
where $\tilde{\mathcal{O}}_P$ is the integral closure of
$\mathcal{O}_P$ in the function field of $\mathcal{C}$. It is known
that $\tilde{\mathcal{O}}_P ^* /\mathcal{O}_P ^*$ is always an
extension of several $\G_a$'s, $\G_m$'s and groups of Witt vectors.
In particular, it is a divisible group. In consequence, $\mbox{Pic}^0 (
\mathcal{C})$ is a divisible group.

\noindent $(2)$ Let $\mathcal{M}$ be a line bundle on $\mathcal{C}$ of degree $\geq 2g+2$. Then $(\mathcal{C},\mathcal{M})$ satisfies property $\QR (4)$ (cf. \cite{SD}). Now, by applying \cite[Corollary 5.2]{HLMP} to $\mathcal{M}$, it holds that if ${\rm char}~\Bbbk \neq 2,3$ then $(\mathcal{C},\mathcal{M}^2 )$ satisfies property $\QR (3)$.

\noindent $(3)$ Suppose that ${\rm char}~\Bbbk = 0$ and $\mathcal{C}$ is smooth. Let $\mathcal{M}$ be a line bundle on $\mathcal{C}$ of degree $2g+1$, which defines a linearly normal embedding
\begin{equation*}
\mathcal{C} \subset \P^{g+1} .
\end{equation*}
By \cite[Theorem 2]{GL}, the following two statements are equivalent:
\begin{enumerate}
\item[$(i)$] $I(\mathcal{C})$ fails to be generated by quadrics;
\item[$(ii)$] there is an effective divisor $D$ of degree $3$ such that $\mathcal{M} = \omega_\mathcal{C} \otimes \mathcal{O}_\mathcal{C} (D)$.
\end{enumerate}
\end{remark}

Now, we give the proofs of Theorem \ref{thm:main 2} and Theorem \ref{thm:main 3}.\\

\noindent {\bf Proof of Theorem \ref{thm:main 2}.} By Theorem \ref{thm:curve structure of rank}, we need to prove that $(\mathcal{C},\mathcal{L})$ satisfies property $\QR(3)$ when $\mathcal{L}$ is a line bundle of degree $4g+4$. To this aim, choose any line bundle $\mathcal{M}_0$ of degree $2g+2$. Then $\mathcal{L} \otimes \mathcal{M}_0 ^{-2}$ is a member of $\mbox{Pic}^0 ( \mathcal{C})$. In particular, there exists a line bundle $\mathcal{N} \in \mbox{Pic}^0 ( \mathcal{C})$ such that
\begin{equation*}
\mathcal{L} \otimes \mathcal{M}_0 ^{-2} =  \mathcal{N}^2
\end{equation*}
since $\mbox{Pic}^0 ( \mathcal{C})$ is a $2$-divisible group (cf. Remark \ref{rmk:curve important}.(1)). In consequence,
\begin{equation*}
\mathcal{L} = \mathcal{M}^2
\end{equation*}
where $\mathcal{M} := \mathcal{M}_0 \otimes \mathcal{N}$ is a line bundle of degree $2g+2$. Namely, $\mathcal{L}$ is the square of a line bundle of degree $2g+2$. Now, it follows by Remark \ref{rmk:curve important}.(2) that $(\mathcal{C},\mathcal{L})$ satisfies property $\QR (3)$. \qed \\

\noindent {\bf Proof of Theorem \ref{thm:main 3}.} We may assume that $\mathcal{C}$ is not hyperelliptic.
By Theorem \ref{thm:curve structure of rank}, we need to show that $(\mathcal{C},\mathcal{L})$
satisfies property $\QR(3)$ when $\mathcal{L}$ is of degree $4g+2$. For such a line bundle $\mathcal{L}$,
one can show that there exists a line bundle $\mathcal{M}$ of degree $2g+1$ such that
\begin{equation*}
\mathcal{L}   =  \mathcal{M}^2 .
\end{equation*}
For details, see the proof of Theorem \ref{thm:main 2}. Now, choose a non-trivial $2$-torsion line bundle $\mathcal{E}$ on $\mathcal{C}$. Let
\begin{equation*}
\mathcal{C}_1 \subset \P^{g+1} \quad \mbox{and} \quad \mathcal{C}_2 \subset \P^{g+1}
\end{equation*}
be respectively the linearly normal embedding of $\mathcal{C}$ by $\mathcal{M}$ and $\mathcal{M} \otimes \mathcal{E}$. We claim that at least one of $I(\mathcal{C}_1 )$ and $I(\mathcal{C}_2 )$ is generated by quadrics. Indeed, suppose not. Then, by Remark \ref{rmk:curve important}.(3), we can write
\begin{equation*}
\mathcal{M} = \omega_\mathcal{C} \otimes \mathcal{O}_\mathcal{C} (D)  \quad \mbox{and} \quad \mathcal{M} \otimes \mathcal{E} = \omega_\mathcal{C} \otimes
\mathcal{O}_\mathcal{C} (D')
\end{equation*}
for some effective divisors $D$ and $D'$ of degree $3$. Then, we get
\begin{equation*}
\mathcal{M} = \omega_\mathcal{C} \otimes \mathcal{O}_\mathcal{C} (D) = \omega_\mathcal{C} \otimes
\mathcal{O}_\mathcal{C} (D') \otimes \mathcal{E}^{-1}
\end{equation*}
and hence $\mathcal{O}_\mathcal{C} (D'-D) = \mathcal{E}$ is a torsion line bundle. Now, by the main theorem in \cite{K} below, the line bundle $\mathcal{O}_\mathcal{C} (D+D')$ must be non-special.\\

\begin{enumerate}
\item[$(\ddagger)$] {\bf Theorem (G. R. Kempf, \cite{K}).} Assume that $\mathcal{C}$ is a smooth complete algebraic
curve with general moduli in characteristic zero. Let $D_0$ and $D_{\infty}$ be
distinct effective divisors of $\mathcal{C}$ such that
$\mathcal{O}_{\mathcal{C}} (D_0 - D_{\infty})$ is a torsion line
bundle. Then the cohomology $H^1 (C,\mathcal{O}_{\mathcal{C}} (D_0 +
D_{\infty}))$ must be zero.\\
\end{enumerate}

\noindent Thus we have
\begin{equation*}
h^0 (\mathcal{C},\mathcal{O}_\mathcal{C} (D+D')) =  6+1-g \geq 1
\end{equation*}
and hence $g \leq 6$, which contradicts to our assumption that $g
\geq 7$. Therefore it is shown that either $I(\mathcal{C}_1 )$ or else $I(\mathcal{C}_2 )$ must be generated by quadrics. Since
\begin{equation*}
\mathcal{L}   =  \mathcal{M}^2 = (\mathcal{M} \otimes \mathcal{E} )^2 ,
\end{equation*}
it follows by Remark \ref{rmk:curve important}.(2) that $(\mathcal{C},\mathcal{L})$ satisfies property $\QR (3)$.   \qed \\

\end{document}